\documentclass[reqno,12pt]{amsart}
\usepackage{amscd,amsfonts,amssymb}
\textwidth=14.8cm
\textheight=24.09cm
\topmargin=0.0cm
\oddsidemargin=1.0cm
\evensidemargin=1.0cm
\numberwithin{equation}{section}
\newtheorem{Theorem}{Theorem}[section]
\newtheorem{Lemma}{Lemma}[section]

\theoremstyle{definition}
\newtheorem{Definition}{Definition}[section]
\theoremstyle{remark}
\newtheorem{Remark}{Remark}[section]
\newtheorem{Example}{Example}[section]

\newcommand{\essinf}{\mathop{\rm ess \, inf}\limits}
\newcommand{\esssup}{\mathop{\rm ess \, sup}\limits}
\newcommand{\supp}{\mathop{\rm supp}\nolimits}

\newcommand{\mes}{\mathop{\rm mes}\nolimits}

\author{A.A. Kon'kov}
\address{Department of Differential Equations,
Faculty of Mechanics and Mathematics,
Mo\-s\-cow Lo\-mo\-no\-sov State University,
Vorobyovy Gory,
Moscow, 119992 Russia}
\email{konkov@mech.math.msu.su}
\author{A.E. Shishkov}
\address{
Center of Nonlinear Problems of Mathematical Physics,
RUDN University,
Miklukho-Maklaya str. 6,
Moscow, 117198 Russia;
Institute of Applied Mathematics and Mechanics of NAS of Ukraine,
Dobrovol'skogo str. 1, Slavyansk, 84116 Ukraine
}
\email{aeshkv@yahoo.com}
\thanks{The work of the second author is supported by RUDN University, Project 5-100}
\title[On stabilization of solutions]{On stabilization of solutions of higher order evolution inequalities}
\thanks{}
\keywords{Higher order evolution inequalities; Nonlinearity; Stabilization}
\subjclass{35K25, 35K55, 35K65, 35B09, 35B40}
\date{}

\begin{document}

\begin{abstract}
We obtain sharp conditions guaranteeing that every non-negative weak solution of the inequality 
$$
	\sum_{|\alpha| = m}
	\partial^\alpha
	a_\alpha (x, t, u)
	-
	u_t
	\ge
	f (x, t) g (u)
	\quad
	\mbox{in } {\mathbb R}_+^{n+1} = {\mathbb R}^n \times (0, \infty),
	\quad
	m,n \ge 1,
$$
stabilizes to zero as $t \to \infty$. These conditions generalize the well-known Keller-Osserman
condition on the grows of the function $g$ at infinity.
\end{abstract}

\maketitle

\section{Introduction}
We study non-negative solutions of the inequality
\begin{equation}
	\sum_{|\alpha| = m}
	\partial^\alpha
	a_\alpha (x, t, u)
	-
	u_t
	\ge
	f (x, t) g (u)
	\quad
	\mbox{in } {\mathbb R}_+^{n+1} = {\mathbb R}^n \times (0, \infty),
	\quad
	m,n \ge 1,
	\label{1.1}
\end{equation}
where $a_\alpha$ are Caratheodory functions such that
\begin{equation}
	|a_\alpha (x, t, \zeta)| 
	\le 
	A \zeta^p,
	\quad
	|\alpha| = m,
	\label{1.2}
\end{equation}
with some constants $A > 0$ and $p \ge 1$ for almost all $(x, t) \in {\mathbb R}_+^{n+1}$ and for all $\zeta \in [0, \infty)$.
In addition, it is assumed that $f$ is a measurable function on the set ${\mathbb R}_+^{n+1}$,
$g (\zeta^{1 / p})$ is a non-decreasing convex function on the closed interval $[0, \infty)$, and $g (\zeta) > 0$ for all $\zeta > 0$.
As is customary, by $\alpha$ we mean a multi-index $\alpha = {(\alpha_1, \ldots, \alpha_n)}$ with
$|\alpha| = \alpha_1 + \ldots + \alpha_n$
and
$\partial^\alpha = {\partial^{|\alpha|} / \partial_{x_1}^{\alpha_1} \ldots \partial_{x_n}^{\alpha_n}}$,
$x = {(x_1, \ldots, x_n)}$.

\begin{Definition}\label{D1.1}
A non-negative function $u \in L_{p, loc} ({\mathbb R}_+^{n+1})$ is called a weak solution of~\eqref{1.1} if 
$f (x, t) g (u) \in L_{1, loc} ({\mathbb R}_+^{n+1})$ and, moreover,
for any non-negative function $\varphi \in C_0^\infty ({\mathbb R}_+^{n+1})$
the following inequality holds:
\begin{align}
	&
	\int_{
		{\mathbb R}_+^{n+1}
	}
	\sum_{|\alpha| = m}
	(- 1)^m
	a_\alpha (x, t, u)
	\partial^\alpha
	\varphi
	\,
	dx
	dt
	+
	\int_{
		{\mathbb R}_+^{n+1}
	}
	u
	\varphi_t
	\,
	dx
	dt
	\nonumber
	\\
	&
	\qquad
	\ge
	\int_{
		{\mathbb R}_+^{n+1}
	}
	f (x, t) g (u)
	\varphi
	\,
	dx
	dt.
	\label{1.3}
\end{align}
\end{Definition}

Questions treated in this paper were earlier investigated
mainly for differential operators of the second order~[1--14].
The case of higher order operators has been studied much less~\cite{B, GS}.
Our aim is to obtain sufficient stabilization conditions for weak solution 
of inequality~\eqref{1.1}.
In so doing, no initial conditions on solutions of~\eqref{1.1} are imposed. 
We even admit that $u (x, t)$ can tend to infinity as $t \to +0$.
We also impose no ellipticity conditions on the coefficients $a_\alpha$ of the differential operator. Thus, our results can be applied to both parabolic and so-called anti-parabolic inequalities.

\section{Main results}

\begin{Theorem}\label{T2.1}
Let
\begin{equation}
	\int_1^\infty
	g^{- 1 / m} (\zeta)
	\zeta^{p / m - 1}
	\,
	d\zeta
	<
	\infty
	\label{T2.1.1}
\end{equation}
and
\begin{equation}
	\lim_{t \to \infty}
	\essinf_{
		K \times (t, \infty)
	}
	f
	=
	\infty
	\label{T2.1.2}
\end{equation}
for any compact set $K \subset {\mathbb R}^n$.
Then every non-negative weak solution of~\eqref{1.1} stabilizes to zero as $t \to \infty$ 
in the $L_1$ norm on an arbitrary compact set $K \subset {\mathbb R}^n$, i.e.
\begin{equation}
	\lim_{t \to \infty}
	\esssup_{\tau \in (t, \infty)}
	\int_K
	u (x, \tau)
	\,
	dx
	=
	0.
	\label{T2.1.3}
\end{equation}
\end{Theorem}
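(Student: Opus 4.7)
The plan is to first derive a Keller--Osserman-type a priori estimate that is uniform in the solution, then exploit \eqref{T2.1.2} to show this estimate becomes arbitrarily small on cylinders $K \times (T, T+\Delta)$ as $T \to \infty$, and finally upgrade the resulting integrated smallness to the pointwise-in-time statement \eqref{T2.1.3}.

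For the a priori estimate, I would insert $\varphi = \psi^\lambda$ into \eqref{1.3} for $\psi \in C_0^\infty({\mathbb R}_+^{n+1})$ a non-negative cutoff and $\lambda \ge m + 1$. Using $|a_\alpha| \le A u^p$ together with $|\partial^\alpha \psi^\lambda| \le C_\psi \psi^{\lambda - m}$ reduces \eqref{1.3} to
$$\int f g(u) \psi^\lambda \, dx \, dt \le C \int u^p \psi^{\lambda - m} \, dx \, dt + C \int u \psi^{\lambda - 1} |\psi_t| \, dx \, dt.$$
Since $G(\zeta) := g(\zeta^{1/p})$ is convex and non-decreasing, the Young-type inequality $\xi \eta \le \delta G(\xi) + \delta G^*(\eta / \delta)$, applied with $\xi = u^p$ and the factor $f \psi^\lambda$ split off appropriately, absorbs the first term on the right into $\tfrac12 \int f g(u) \psi^\lambda$ modulo an error of the form $\int f \psi^\lambda G^*\!\bigl(\psi^{-m}/(\delta f)\bigr) \, dx \, dt$. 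A change of variables identifies \eqref{T2.1.1} as precisely the condition making this error finite, yielding the uniform bound $\int f g(u) \psi^\lambda \, dx \, dt \le C(\psi)$. The term with $\psi_t$ is handled analogously.

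Fixing a compact $K \subset {\mathbb R}^n$ and a cutoff $\eta \in C_0^\infty({\mathbb R}^n)$ with $\eta \equiv 1$ on $K$ and $K' := \supp \eta$, applying the a priori bound to $\psi(x, t) = \eta(x) \chi\bigl((t - T)/\Delta\bigr)$ combined with \eqref{T2.1.2} gives
$$\int_T^{T + \Delta}\!\!\int_K g(u) \, dx \, dt \le \frac{C(K', \Delta)}{\essinf_{K' \times (T, \infty)} f} \longrightarrow 0 \quad \text{as } T \to \infty,$$
and a further application of the same Young-type inequality converts this into $\int_T^{T + \Delta} \int_K u \, dx \, dt \to 0$.

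The main obstacle is the last step: upgrading this integrated-in-time smallness to the pointwise-in-time statement. To this end I would test \eqref{1.3} with $\varphi = \eta^\lambda \chi_\delta$, where $\chi_\delta$ approximates $\mathbf{1}_{(s_1, s_2)}$, and let $\delta \to 0$ to obtain, for a.e. $0 < s_1 < s_2$,
$$\int u(x, s_2) \eta^\lambda \, dx \le \int u(x, s_1) \eta^\lambda \, dx + C \int_{s_1}^{s_2}\!\!\int u^p |\nabla^m \eta^\lambda| \, dx \, dt,$$
after dropping the non-negative term $\int f g(u) \eta^\lambda$. The middle integrand is supported in $K' \setminus K$, so iterating the previous two steps with a slightly enlarged cutoff forces it to tend to zero as $s_1 \to \infty$. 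By the mean value theorem applied to the integrated smallness, one chooses $s_1 \in [T, T+1]$ with $\int u(x, s_1) \eta^\lambda \, dx$ arbitrarily small for large $T$, and \eqref{T2.1.3} follows.
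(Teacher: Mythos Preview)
Your approach is different from the paper's and has a genuine gap at the claim ``a change of variables identifies \eqref{T2.1.1} as precisely the condition making this error finite.'' For convex $G$ close to the borderline of \eqref{T2.1.1} the Legendre dual $G^*$ can grow super-polynomially, and then the error $\int f\psi^\lambda G^*\bigl(\psi^{-m}/(\delta f)\bigr)$ diverges for \emph{every} exponent $\lambda$. Concretely, take $p=1$ and $g(\zeta)=G(\zeta)=\zeta\bigl(\log(e+\zeta)\bigr)^{m+1}$. Condition \eqref{T2.1.1} holds, since $\int_e^\infty G(\xi)^{-1/m}\xi^{1/m-1}\,d\xi=\int_e^\infty \xi^{-1}(\log\xi)^{-1-1/m}\,d\xi<\infty$. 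But solving $G'(\xi)=s$ gives $\xi\sim\exp\bigl(s^{1/(m+1)}\bigr)$ and hence $G^*(s)\sim (m+1)\,s^{m/(m+1)}\exp\bigl(s^{1/(m+1)}\bigr)$; near $\partial(\supp\psi)$, where $\psi\sim d\to 0$, your error integrand behaves like $d^{\lambda}\exp\bigl(c\,d^{-m/(m+1)}\bigr)$, which is not integrable. So the a~priori bound $\int fg(u)\psi^\lambda\le C(\psi)$ cannot be obtained from $\varphi=\psi^\lambda$ and Young's inequality for this $G$, and the chain leading to $\int_T^{T+\Delta}\int_K g(u)\to 0$ breaks.

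The paper avoids $G^*$ entirely. After a single test-function estimate (Lemma~3.1) it sets $J_R(r,\tau)=\bigl(\mes Q_{2R}^{\tau-2^mR^m,\tau}\bigr)^{-1}\int_{Q_r^{\tau-r^m,\tau}}u^p$ and shows, via Jensen for the convex $G$, a dichotomy: for $R\le r_1<r_2\le 2R$ either $J_R(r_2,\tau)-J_R(r_1,\tau)\ge C(r_2-r_1)^m G(J_R(r_1,\tau))\,\essinf f$ or the companion estimate with $G^p$ holds. It then chooses a finite sequence $R=r_0<r_1<\cdots<r_l\le 2R$ along which $J_R$ doubles, and summing the increments produces directly
\[
\int_{J_R(R,\tau)}^\infty G^{-1/m}(\zeta/2)\,\zeta^{1/m-1}\,d\zeta
\;+\;\int_{J_R(R,\tau)}^\infty\frac{d\zeta}{G(\zeta/2)}
\;+\;\int_{J_R(R,\tau)}^\infty\frac{d\zeta}{G^p(\zeta/2)}
\;\ge\; C\,R\min\{\essinf f^{1/m},\ldots\},
\]
so \eqref{T2.1.1} together with \eqref{T2.1.2} forces $J_R(R,\tau)\to 0$. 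The point is that this doubling iteration \emph{manufactures} the Keller--Osserman integral \eqref{T2.1.1} from the increments, without any duality; that is exactly what makes the argument survive for $G$ arbitrarily close to critical growth. The pointwise-in-time upgrade is then done, as in your last step, by a second use of Lemma~3.1 combined with Steklov averaging. On that last step, note also that fixing one $s_1\in[T,T+1]$ does not control $\int_{s_1}^{s_2}\!\int u^p|\nabla^m\eta^\lambda|$ uniformly in $s_2$; you must choose $s_1\in(s_2-1,s_2)$ for each $s_2$, so the integral is over a window of unit length and your earlier smallness on bounded windows applies.
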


The proof of Theorem~\ref{T2.1} is given in Section~\ref{proofOfTheorem}.

\begin{Remark}\label{R2.1}
Since $u \in L_{1, loc} ({\mathbb R}_+^{n+1})$, the integral on the left in~\eqref{T2.1.3} 
is defined for almost all $\tau \in (0, \infty)$.
\end{Remark}

\begin{Example}\label{E2.1}
Consider the inequality
\begin{equation}
	\Delta^{m / 2} u - u_t \ge f (x, t) u^\lambda
	\quad
	\mbox{in } {\mathbb R}_+^{n+1},
	\label{E2.1.1}
\end{equation}
where $m$ is a positive even integer and $\lambda$ is a real number.
By Theorem~\ref{T2.1}, if 
\begin{equation}
	\lambda > 1
	\label{E2.1.2}
\end{equation}
and~\eqref{T2.1.2} is valid, 
then every non-negative weak solution of~\eqref{E2.1.1} stabilizes to zero as $t \to \infty$ 
in $L_1$ norm on an arbitrary compact subset of ${\mathbb R}^n$.

Now, let us consider the inequality
\begin{equation}
	\Delta^{m / 2} u - u_t \ge f (x, t) u \ln^\nu (1 + u)
	\quad
	\mbox{in } {\mathbb R}_+^{n+1},
	\label{E2.1.3}
\end{equation}
where $\nu$ is a real number. In other words, we examine the case of the critical exponent 
$\lambda = 1$ in the right-hand side of~\eqref{E2.1.1} ``spoiled'' by the logarithm.
As before, we assume that $m$ is a positive even integer.

It can easily be seen that~\eqref{T2.1.1} is equivalent to the condition 
\begin{equation}
	\nu > m.
	\label{E2.1.4}
\end{equation}
Thus, if~\eqref{T2.1.2} and~\eqref{E2.1.4} are valid, 
then Theorem~\ref{T2.1} implies that every non-negative weak solution of~\eqref{E2.1.3}
stabilizes to zero as $t \to \infty$ in $L_1$ norm on an arbitrary compact subset of ${\mathbb R}^n$.

Condition~\eqref{E2.1.4} is the best possible. 
Really, let us show that there exists a positive function
$f \in {C ({\mathbb R}^n \times [0, \infty))}$ for which~\eqref{T2.1.2} holds and the inequality
\begin{equation}
	\Delta^{m / 2} u - u_t \ge f (x, t) u \ln^m (1 + u)
	\quad
	\mbox{in } {\mathbb R}_+^{n+1}
	\label{E2.1.5}
\end{equation}
has a classical solution satisfying the bound 
$u (x, t) \ge e$ for all $(x, t) \in {\mathbb R}_+^{n+1}$.
It is obvious that this solution is also a solution of~\eqref{E2.1.3} for all $\nu \le m$.
We shall seek it in the form 
$$
	u (x, t)
	=
	e^{
		e^{
			w (x, t)
		}
	},
$$
where
$$
	w (x, t)
	=
	(t + 1)
	\sum_{i=1}^n
	e^{x_i}.
$$
By direct differentiation, one can verify that
$$
	\Delta^{m / 2} u
	\ge
	\sum_{i=1}^n
	(t + 1)^m
	e^{m x_i}
	e^{m w (x, t)}
	u
	+
	\sum_{i=1}^n
	(t + 1)
	e^{x_i}
	e^{w (x, t)}
	u
$$
for all $(x, t) \in {\mathbb R}_+^{n+1}$.
Since
$$
	u_t
	=
	\sum_{i=1}^n
	e^{x_i}
	e^{w (x, t)}
	u,
$$
this yields
\begin{equation}
	\Delta^{m / 2} u - u_t
	\ge
	\sum_{i=1}^n
	(t + 1)^m
	e^{m x_i}
	e^{m w (x, t)}
	u
	\label{E2.1.6}
\end{equation}
for all $(x, t) \in {\mathbb R}_+^{n+1}$.
It can be seen that
$$
	e^{w (x, t)} 
	= 
	\ln u 
	\ge
	\frac{1}{2}
	\ln (1 + u)
$$
for all $(x, t) \in {\mathbb R}_+^{n+1}$.
Thus,~\eqref{E2.1.6} implies inequality~\eqref{E2.1.5} with
$$
	f (x, t)
	=
	\frac{1}{2^m}
	\sum_{i=1}^n
	(t + 1)^m
	e^{m x_i}.
$$

Note that, along with~\eqref{E2.1.4}, we have established the exactness of condition~\eqref{E2.1.2}.
In fact, any solution of~\eqref{E2.1.5} satisfying the inequality $u \ge e$ on the set 
${\mathbb R}_+^{n+1}$ is also a solution of~\eqref{E2.1.1} for all $\lambda \le 1$.
\end{Example}

\begin{Remark}\label{R2.2}
If $m = 2$ and $p = 1$, then~\eqref{T2.1.1} takes the form
\begin{equation}
	\int_1^\infty
	(g (t) t)^{- 1 / 2}
	\,
	dt
	<
	\infty.
	\label{R2.2.1}
\end{equation}
It is easy to see that~\eqref{R2.2.1} is equivalent to the well-known Keller-Osserman condition 
\begin{equation}
	\int_1^\infty
	\left(
		\int_1^t
		g (s)
		\,
		ds
	\right)^{-1/2}
	\,
	dt
	<
	\infty
	\label{R2.2.2}
\end{equation}
on the grows of the function $g$ at infinity~\cite{Keller, Osserman} 
which plays an important role in the theory of semilinear elliptic and parabolic equations (see, for instance,~\cite{SV} and references therein). 
Really, since $g$ is a non-decreasing positive function on the interval $(0, \infty)$, we have
$$
	\int_1^t
	g (s)
	\,
	ds
	\ge
	\int_{t/2}^t
	g (s)
	\,
	ds
	\ge
	\frac{t}{2} 
	g
	\left(
		\frac{t}{2}
	\right),
	\quad
	t > 2.
$$
Hence,~\eqref{R2.2.1} implies~\eqref{R2.2.2}.
On the other hand,
$$
	\int_1^t
	g (s)
	\,
	ds
	\le
	t g (t),
	\quad
	t > 1;
$$
therefore,~\eqref{R2.2.1} follows from~\eqref{R2.2.2}.

We can in this context call~\eqref{T2.1} as a generalized Keller-Osserman condition.
In Example~\ref{E2.1}, it is shown that this condition is the best possible.
We put forward a hypothesis that~\eqref{T2.1} is also a necessary stabilization condition for solutions of inequality~\eqref{1.1}.
\end{Remark}

\section{Proof of Theorem~\ref{T2.1}}\label{proofOfTheorem}

Below, it is assumed that $u$ is a non-negative weak solution of~\eqref{1.1}.
By $C$ we mean various positive constants that can depend only on $m$, $n$, and $p$.

Let us use the following notations. We denote 
$B_r = \{ x \in {\mathbb R}^n : |x| < r \}$ 
and
$Q_r^{t_1, t_2} = \{ (x, t) \in {\mathbb R}^{n + 1}: |x| < r, t_1 < t < t_2 \}$.
Further, let $\omega \in C^\infty ({\mathbb R})$ be a non-negative function such that
$\supp \omega \subset (-1, 1)$ and
$$
	\int_{-\infty}^\infty
	\omega
	\,
	d t
	=
	1.
$$
We need the Steklov-Schwartz averaging kernel
\begin{equation}
	\omega_h (t) 
	= 
	\frac{1}{h} 
	w
	\left(
		\frac{t}{h}
	\right),
	\quad
	h > 0.
	\label{3.1}
\end{equation}

\begin{Lemma}\label{L3.1}
Let $0 < r_1 < r_2$ and $0 < h < \tau_1 < \tau_2 < \tau$ be some real numbers with $r_2 \le 2 r_1$. 
If $f (x, t) \ge 0$ for almost all $(x,t) \in Q_{r_2}^{\tau - \tau_2, \tau + h}$, then
\begin{align}
	&
	\frac{1}{(r_2 - r_1)^m}
	\int_{
		Q_{r_2}^{\tau - \tau_2, \tau + h}
		\setminus
		Q_{r_1}^{\tau - \tau_2, \tau + h}
	}
	u^p
	\,
	dx
	dt
	+
	\frac{1}{\tau_2 - \tau_1}
	\int_{
		Q_{r_2}^{\tau - \tau_2, \tau - \tau_1}
	}
	u
	\,
	dx
	dt
	\nonumber
	\\
	&
	\qquad
	\ge
	C
	\left(
		\int_{
			Q_{r_1}^{\tau - \tau_1, \tau - h}
		}
		f (x, t)
		g (u)
		\,
		dx
		dt
		+
		\int_{
			Q_{r_1}^{\tau - h, \tau + h}
		}
		\omega_h (\tau - t)
		u
		\,
		dx
		dt
	\right).
	\label{L3.1.1}
\end{align}
\end{Lemma}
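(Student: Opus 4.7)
The plan is to substitute into the weak formulation~\eqref{1.3} a non-negative test function of the product form $\varphi(x,t) = \psi(x)\,\chi_0(t)\,\chi_1(t)$. Here $\psi\in C_0^\infty(\mathbb{R}^n)$ is a standard spatial cutoff with $0\le\psi\le 1$, $\psi\equiv 1$ on $B_{r_1}$, $\supp\psi\subset B_{r_2}$, and $|\partial^\alpha\psi|\le C(r_2-r_1)^{-|\alpha|}$ for $|\alpha|\le m$. The temporal cutoff $\chi_0\in C^\infty(\mathbb{R})$ is non-decreasing with $\chi_0\equiv 0$ for $t\le\tau-\tau_2$, $\chi_0\equiv 1$ for $t\ge\tau-\tau_1$, and $0\le\chi_0'\le C/(\tau_2-\tau_1)$. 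Finally, $\chi_1(t)=\int_t^{+\infty}\omega_h(\tau-s)\,ds$, so that $\chi_1\equiv 1$ on $(-\infty,\tau-h]$, $\chi_1\equiv 0$ on $[\tau+h,+\infty)$, and $\chi_1'(t)=-\omega_h(\tau-t)$. Because $0<h<\tau_1$, the supports of $\chi_0'$ and $\chi_1'$ are disjoint, with $\chi_1\equiv 1$ on $\supp\chi_0'\subset[\tau-\tau_2,\tau-\tau_1]$ and $\chi_0\equiv 1$ on $\supp\chi_1'\subset[\tau-h,\tau+h]$. Consequently
\[
\varphi_t(x,t) \;=\; \psi(x)\,\chi_0'(t)\;-\;\psi(x)\,\omega_h(\tau-t),
\]
and $\varphi\in C_0^\infty(\mathbb{R}_+^{n+1})$ because $\tau-\tau_2>0$.

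Substituting $\varphi$ into~\eqref{1.3} and estimating each term is then routine. By~\eqref{1.2}, the spatial contribution $(-1)^m\int\sum_{|\alpha|=m}a_\alpha\,\partial^\alpha\varphi\,dx\,dt$ is bounded in absolute value by $C(r_2-r_1)^{-m}\int_{(B_{r_2}\setminus B_{r_1})\times(\tau-\tau_2,\tau+h)}u^p\,dx\,dt$, since for $|\alpha|=m$ the factor $\partial^\alpha\psi$ is supported in the spatial annulus and bounded by $C(r_2-r_1)^{-m}$. The $\chi_0'$-part of $\int u\,\varphi_t\,dx\,dt$ is bounded by $C(\tau_2-\tau_1)^{-1}\int_{Q_{r_2}^{\tau-\tau_2,\tau-\tau_1}}u\,dx\,dt$. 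On the right-hand side, the sign hypothesis on $f$ (together with $g(u)\ge 0$, $\psi,\chi_0,\chi_1\ge 0$, $\omega_h\ge 0$) permits restriction to smaller domains: on $Q_{r_1}^{\tau-\tau_1,\tau-h}$ one has $\varphi\equiv 1$, giving $\int f g(u)\varphi\,dx\,dt\ge\int_{Q_{r_1}^{\tau-\tau_1,\tau-h}}f g(u)\,dx\,dt$, while on $Q_{r_1}^{\tau-h,\tau+h}$ one has $\psi\equiv 1$, giving $\int u\,\psi\,\omega_h(\tau-t)\,dx\,dt\ge\int_{Q_{r_1}^{\tau-h,\tau+h}}u\,\omega_h(\tau-t)\,dx\,dt$. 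Moving the $\omega_h$-term to the right and absorbing the constant $A$ from~\eqref{1.2} into $C$ yields~\eqref{L3.1.1}.

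The main bookkeeping step is the construction of $\chi_1$ as the antiderivative of $-\omega_h(\tau-\cdot)$ together with the verification, using $h<\tau_1$, that $\supp\chi_0'$ and $\supp\chi_1'$ are disjoint; this decoupling is what makes $\varphi_t$ split cleanly into a ``ramp'' piece at the lower end and a ``Steklov'' piece at $t=\tau$ with no cross-terms, and so matches the two terms on the right-hand side of~\eqref{L3.1.1} exactly. Everything else is a straightforward application of~\eqref{1.2}, the derivative bounds on the cutoffs, and the sign information on the integrands; no delicate absorption or iteration is needed at this stage.
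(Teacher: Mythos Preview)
Your proof is correct and follows essentially the same approach as the paper: the paper uses the test function $\varphi(x,t)=\varphi_0\bigl(\tfrac{r_2-|x|}{r_2-r_1}\bigr)\,\varphi_0\bigl(\tfrac{t-\tau+\tau_2}{\tau_2-\tau_1}\bigr)\,\eta(t)$ with $\eta(t)=\int_t^\infty\omega_h(\tau-\xi)\,d\xi$, which is exactly your $\psi(x)\chi_0(t)\chi_1(t)$ in slightly more explicit radial form, and then estimates each term of~\eqref{1.3} just as you do. The only cosmetic difference is that the paper writes down the specific radial profile for the spatial cutoff (implicitly using $r_2\le 2r_1$ to control the chain-rule factors), whereas you invoke a generic cutoff with the stated derivative bounds.
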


\begin{proof}
We take a non-decreasing function $\varphi_0 \in C^\infty ({\mathbb R})$ such that
$$
	\left.
		\varphi_0
	\right|_{
		(- \infty, 0]
	}
	=
	0
	\quad
	\mbox{and}
	\quad
	\left.
		\varphi_0
	\right|_{
		[1, \infty)
	}
	=
	1.
$$
Also let
$$
	\eta (t)
	=
	\int_t^\infty
	\omega_h (\tau - \xi)
	\,
	d\xi.
$$
Using
$$
	\varphi (x, t)
	=
	\varphi_0
	\left(
		\frac{r_2 - |x|}{r_2 - r_1}
	\right)
	\varphi_0
	\left(
		\frac{t - \tau + \tau_2}{\tau_2 - \tau_1}
	\right)
	\eta (t)
$$
as a test function in~\eqref{1.3}, we obtain
\begin{align}
	&
	\int_{
		{\mathbb R}_+^{n+1}
	}
	\sum_{|\alpha| = m}
	(- 1)^m
	a_\alpha (x, t, u)
	\partial^\alpha
	\varphi_0
	\left(
		\frac{r_2 - |x|}{r_2 - r_1}
	\right)
	\varphi_0
	\left(
		\frac{t - \tau + \tau_2}{\tau_2 - \tau_1}
	\right)
	\eta (t)
	\,
	dx
	dt
	\nonumber
	\\
	&
	+
	\int_{
		{\mathbb R}_+^{n+1}
	}
	u
	\varphi_0
	\left(
		\frac{r_2 - |x|}{r_2 - r_1}
	\right)
	\frac{\partial \varphi_0}{\partial t}
	\left(
		\frac{t - \tau + \tau_2}{\tau_2 - \tau_1}
	\right)
	\eta (t)
	\,
	dx
	dt
	\nonumber
	\\
	&
	\qquad
	\ge
	\int_{
		{\mathbb R}_+^{n+1}
	}
	f (x, t) g (u)
	\varphi_0
	\left(
		\frac{r_2 - |x|}{r_2 - r_1}
	\right)
	\varphi_0
	\left(
		\frac{t - \tau + \tau_2}{\tau_2 - \tau_1}
	\right)
	\eta (t)
	\,
	dx
	dt
	\nonumber
	\\
	&
	\qquad
	\phantom{{}\ge{}}
	+
	\int_{
		{\mathbb R}_+^{n+1}
	}
	u
	\varphi_0
	\left(
		\frac{r_2 - |x|}{r_2 - r_1}
	\right)
	\varphi_0
	\left(
		\frac{t - \tau + \tau_2}{\tau_2 - \tau_1}
	\right)
	\omega_h (\tau - t)
	\,
	dx
	dt.
	\label{PL3.1.1}
\end{align}
Condition~\eqref{1.2} allows us to assert that
\begin{align*}
	&
	\frac{1}{(r_2 - r_1)^m}
	\int_{
		Q_{r_2}^{\tau - \tau_2, \tau + h}
		\setminus
		Q_{r_1}^{\tau - \tau_2, \tau + h}
	}
	u^p
	\,
	dx
	dt
	\\
	&
	\qquad
	\ge
	C
	\int_{
		{\mathbb R}_+^{n+1}
	}
	\sum_{|\alpha| = m}
	\left|
		a_\alpha (x, t, u)
		\partial^\alpha
		\varphi_0
		\left(
			\frac{r_2 - |x|}{r_2 - r_1}
		\right)
		\varphi_0
		\left(
			\frac{t - \tau + \tau_2}{\tau_2 - \tau_1}
		\right)
		\eta (t)
	\right|
	\,
	dx
	dt.
\end{align*}
In so doing, we obviously have
$$
	\frac{1}{\tau_2 - \tau_1}
	\int_{
		Q_{r_2}^{\tau - \tau_2, \tau - \tau_1}
	}
	u
	\,
	dx
	dt
	\ge
	C
	\int_{
		{\mathbb R}_+^{n+1}
	}
	u
	\varphi_0
	\left(
		\frac{r_2 - |x|}{r_2 - r_1}
	\right)
	\frac{\partial \varphi_0}{\partial t}
	\left(
		\frac{t - \tau + \tau_2}{\tau_2 - \tau_1}
	\right)
	\eta (t)
	\,
	dx
	dt
$$
and
\begin{align*}
	&
	\int_{
		{\mathbb R}_+^{n+1}
	}
	f (x, t) g (u)
	\varphi_0
	\left(
		\frac{r_2 - |x|}{r_2 - r_1}
	\right)
	\varphi_0
	\left(
		\frac{t - \tau + \tau_2}{\tau_2 - \tau_1}
	\right)
	\eta (t)
	\,
	dx
	dt
	\\
	&
	\qquad
	\ge
	\int_{
		Q_{r_1}^{\tau - \tau_1, \tau - h}
	}
	f (x, t)
	g (u)
	\,
	dx
	dt.
\end{align*}
Finally, since
$
	\omega_h (\tau - t) = 0
$ 
for all $t \not\in (\tau - h, \tau + h)$ and
${\varphi_0	((t - \tau + \tau_2) / (\tau_2 - \tau_1))} = 1$
for all $t > \tau - h$, the second summand in the right-hand side of~\eqref{PL3.1.1}
can be estimated as follows:
\begin{align*}
	&
	\int_{
		{\mathbb R}_+^{n+1}
	}
	u
	\varphi_0
	\left(
		\frac{r_2 - |x|}{r_2 - r_1}
	\right)
	\varphi_0
	\left(
		\frac{t - \tau + \tau_2}{\tau_2 - \tau_1}
	\right)
	\omega_h (\tau - t)
	\,
	dx
	dt
	\\
	&
	\qquad
	\ge
	\int_{
		Q_{r_1}^{\tau - h, \tau + h}
	}
	\omega_h (\tau - t)
	u 
	\,
	dx
	dt.
\end{align*}
Thus, combining~\eqref{PL3.1.1} with the last four inequalities, we deduce~\eqref{L3.1.1}.
\end{proof}

For any real number $R > 0$ we define the function
$$
	J_R (r, \tau)
	=
	\frac{
		1
	}{
		\mes Q_{2 R}^{\tau - 2^m R^m, \tau}
	}
	\int_{
		Q_r^{\tau - r^m, \tau}
	}
	u^p
	\,
	dx
	dt,
$$
where $0 < r \le 2 R$ and $\tau > 2^m R^m$.
Also put
$$
	G (\zeta) = g (\zeta^{1 / p}),
	\quad
	\zeta \ge 0.
$$

\begin{Lemma}\label{L3.2}
Let $R > 0$ and $\tau > 0$ be some real numbers such that $\tau > 2^m R^m$ and
$f (x, t) \ge 0$ for almost all $(x, t) \in Q_{2 R}^{\tau - 2^m R^m, \infty}$.
Then for all $r_1$ and $r_2$ satisfying the condition $R \le r_1 < r_2 \le 2 R$
at least one of the following two inequalities is valid:
\begin{equation}
	J_R (r_2, \tau) - J_R (r_1, \tau)
	\ge
	C 
	(r_2 - r_1)^m
	G (J_R (r_1, \tau))
	\essinf_{
		Q_{2 R}^{\tau - 2^m R^m, \tau}
	}
	f,
	\label{L3.2.1}
\end{equation}
\begin{equation}
	J_R (r_2, \tau) - J_R (r_1, \tau)
	\ge
	C 
	R^{m p - 1}
	(r_2 - r_1)
	G^p (J_R (r_1, \tau))
	\essinf_{
		Q_{2 R}^{\tau - 2^m R^m, \tau}
	}
	f^p.
	\label{L3.2.2}
\end{equation}
\end{Lemma}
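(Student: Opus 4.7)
The plan is to apply Lemma~\ref{L3.1} with $\tau_1=r_1^m$ and $\tau_2=r_2^m$, drop the non-negative Steklov term on the right-hand side, and pass to the limit $h\to 0$. On the $\int f\,g(u)$ term I would first replace $f$ by its essential infimum $F:=\essinf_{Q_{2R}^{\tau-2^mR^m,\tau}} f$ (valid since $Q_{r_1}^{\tau-r_1^m,\tau}\subset Q_{2R}^{\tau-2^mR^m,\tau}$ because $r_1\le 2R$), and then invoke Jensen's inequality for the convex non-decreasing function $G(\zeta)=g(\zeta^{1/p})$ composed with $u^p$. Because $\mes Q_{r_1}^{\tau-r_1^m,\tau}\le \mes Q_{2R}^{\tau-2^mR^m,\tau}$, the mean of $u^p$ on $Q_{r_1}^{\tau-r_1^m,\tau}$ is at least $J_R(r_1,\tau)$; together with $\mes Q_{r_1}^{\tau-r_1^m,\tau}\ge C R^{n+m}$ (via $r_1\ge R$), this produces the lower bound
$$\int_{Q_{r_1}^{\tau-r_1^m,\tau}} f\,g(u)\,dx\,dt \,\ge\, C R^{n+m} F\,G(J_R(r_1,\tau)).$$

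The decisive bookkeeping is the disjoint decomposition
$$Q_{r_2}^{\tau-r_2^m,\tau}\setminus Q_{r_1}^{\tau-r_1^m,\tau} = \bigl[(B_{r_2}\setminus B_{r_1})\times(\tau-r_2^m,\tau)\bigr]\cup\bigl[B_{r_1}\times(\tau-r_2^m,\tau-r_1^m)\bigr].$$
The first piece is exactly the domain of the first left-hand side integral of Lemma~\ref{L3.1} (as $h\to 0$), and the second piece is contained in the domain $Q_{r_2}^{\tau-r_2^m,\tau-r_1^m}$ of its second left-hand side integral. Setting $M:=\mes Q_{2R}^{\tau-2^mR^m,\tau}\asymp R^{n+m}$, it follows that the integral of $u^p$ over either piece, and hence also over $Q_{r_2}^{\tau-r_2^m,\tau-r_1^m}$, is bounded above by $M[J_R(r_2,\tau)-J_R(r_1,\tau)]$.

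Writing $I_1:=\int_{(B_{r_2}\setminus B_{r_1})\times(\tau-r_2^m,\tau)} u^p\,dx\,dt$ and $I_2:=\int_{Q_{r_2}^{\tau-r_2^m,\tau-r_1^m}} u\,dx\,dt$, Lemma~\ref{L3.1} now reads
$$\frac{I_1}{(r_2-r_1)^m} + \frac{I_2}{r_2^m-r_1^m} \,\ge\, C R^{n+m} F\,G(J_R(r_1,\tau)),$$
and at least one summand is $\ge$ half the right-hand side. If the first dominates, inserting $I_1\le M[J_R(r_2,\tau)-J_R(r_1,\tau)]$ and dividing by $M\asymp R^{n+m}$ yields \eqref{L3.2.1} at once. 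If the second dominates, I apply H\"older's inequality on $Q_{r_2}^{\tau-r_2^m,\tau-r_1^m}$, whose measure is at most $C R^n(r_2^m-r_1^m)$, in the form $\int u^p \ge I_2^p/(\mes)^{p-1}$; combining with the elementary bound $r_2^m-r_1^m\ge m R^{m-1}(r_2-r_1)$ and collecting the $R$-powers yields exactly the exponents $R^{mp-1}$ and $(r_2-r_1)^1$ of \eqref{L3.2.2}.

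The main technical obstacle is the asymmetry between the $u^p$-term and the $u$-term on the left of Lemma~\ref{L3.1}: in the second case it is precisely the H\"older conversion from $u$ to $u^p$ that raises both $G$ and $F$ to the $p$-th power and reduces $(r_2-r_1)^m$ to $(r_2-r_1)$, which explains why the statement produces two alternative conclusions rather than one. Justifying the limit $h\to 0$ (or fixing a convenient small $h$) and carefully tracking constants depending only on $m$, $n$, $p$ under the scaling $R\le r_1<r_2\le 2R$ is routine.
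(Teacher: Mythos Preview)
Your proposal is correct and follows essentially the same route as the paper: apply Lemma~\ref{L3.1} with $\tau_1=r_1^m$, $\tau_2=r_2^m$, drop the Steklov term, let $h\to 0$, bound the right-hand side via $\essinf f$ and Jensen for the convex non-decreasing $G$, and split into two cases according to which of the two left-hand terms dominates, handling the $u$-term via H\"older. Your disjoint decomposition of $Q_{r_2}^{\tau-r_2^m,\tau}\setminus Q_{r_1}^{\tau-r_1^m,\tau}$ is just a slightly more explicit way of writing the two set containments the paper invokes, and your case split ``one summand is $\ge$ half the RHS'' is equivalent to the paper's ``first term $\ge$ second term or not''; the bookkeeping and the resulting exponents are identical.
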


\begin{proof}
Inequality~\eqref{L3.1.1} of Lemma~\ref{L3.1} with $\tau_1 = r_1^m$ and $\tau_2 = r_2^m$ yields
\begin{align*}
	&
	\frac{1}{(r_2 - r_1)^m}
	\int_{
		Q_{r_2}^{\tau - r_2^m, \tau + h}
		\setminus
		Q_{r_1}^{\tau - r_2^m, \tau + h}
	}
	u^p
	\,
	dx
	dt
	+
	\frac{1}{r_2^m - r_1^m}
	\int_{
		Q_{r_2}^{\tau - r_2^m, \tau - r_1^m}
	}
	u
	\,
	dx
	dt
	\\
	&
	\qquad
	\ge
	C
	\int_{
		Q_{r_1}^{\tau - r_1^m, \tau - h}
	}
	f (x, t)
	g (u)
	\,
	dx
	dt
\end{align*}
for all sufficiently small $h > 0$.
Note that the second summand in the right-hand side of~\eqref{L3.1.1} is non-negative; 
therefore, it can be dropped. 
Passing to the limit as $h \to +0$ in the last estimate, we obviously obtain
\begin{align}
	&
	\frac{1}{(r_2 - r_1)^m}
	\int_{
		Q_{r_2}^{\tau - r_2^m, \tau}
		\setminus
		Q_{r_1}^{\tau - r_2^m, \tau}
	}
	u^p
	\,
	dx
	dt
	+
	\frac{1}{r_2^m - r_1^m}
	\int_{
		Q_{r_2}^{\tau - r_2^m, \tau - r_1^m}
	}
	u
	\,
	dx
	dt
	\nonumber
	\\
	&
	\qquad
	\ge
	C
	\int_{
		Q_{r_1}^{\tau - r_1^m, \tau}
	}
	f (x, t)
	g (u)
	\,
	dx
	dt.
	\label{PL3.2.1}
\end{align}

Assume that
\begin{equation}
	\frac{1}{(r_2 - r_1)^m}
	\int_{
		Q_{r_2}^{\tau - r_2^m, \tau}
		\setminus
		Q_{r_1}^{\tau - r_2^m, \tau}
	}
	u^p
	\,
	dx
	dt
	\ge
	\frac{1}{r_2^m - r_1^m}
	\int_{
		Q_{r_2}^{\tau - r_2^m, \tau - r_1^m}
	}
	u
	\,
	dx
	dt.
	\label{PL3.2.a2}
\end{equation}
Then~\eqref{PL3.2.1} implies the inequality
\begin{equation}
	\int_{
		Q_{r_2}^{\tau - r_2^m, \tau}
		\setminus
		Q_{r_1}^{\tau - r_2^m, \tau}
	}
	u^p
	\,
	dx
	dt
	\ge
	C
	(r_2 - r_1)^m
	\int_{
		Q_{r_1}^{\tau - r_1^m, \tau}
	}
	f (x, t)
	g (u)
	\,
	dx
	dt.
	\label{PL3.2.a5}
\end{equation}
We have
$$
	Q_{r_2}^{\tau - r_2^m, \tau}
	\setminus
	Q_{r_1}^{\tau - r_2^m, \tau}
	\subset
	Q_{r_2}^{\tau - r_2^m, \tau}
	\setminus
	Q_{r_1}^{\tau - r_1^m, \tau};
$$
therefore,
\begin{align*}
	J_R (r_2, \tau) - J_R (r_1, \tau)
	&
	=
	\frac{
		1
	}{
		\mes Q_{2 R}^{\tau - 2^m R^m, \tau}
	}
	\int_{
		Q_{r_2}^{\tau - r_2^m, \tau}
		\setminus
		Q_{r_1}^{\tau - r_1^m, \tau}
	}
	u^p
	\,
	dx
	dt
	\\
	&
	\ge
	\frac{
		1
	}{
		\mes Q_{2 R}^{\tau - 2^m R^m, \tau}
	}
	\int_{
		Q_{r_2}^{\tau - r_2^m, \tau}
		\setminus
		Q_{r_1}^{\tau - r_2^m, \tau}
	}
	u^p
	\,
	dx
	dt.
\end{align*}
Combining this with~\eqref{PL3.2.a5}, we obtain
\begin{equation}
	J_R (r_2, \tau) - J_R (r_1, \tau)
	\ge
	\frac{
		C (r_2 - r_1)^m
	}{
		\mes Q_{2 R}^{\tau - 2^m R^m, \tau}
	}
	\int_{
		Q_{r_1}^{\tau - r_1^m, \tau}
	}
	f (x, t)
	g (u)
	\,
	dx
	dt.
	\label{PL3.2.2}
\end{equation}
Let us establish the validity of the estimate
\begin{equation}
	\int_{
		Q_{r_1}^{\tau - r_1^m, \tau}
	}
	f (x, t)
	g (u)
	\,
	dx
	dt
	\ge
	G (J_R (r_1, \tau))
	\mes Q_{r_1}^{\tau - r_1^m, \tau}
	\essinf_{
		Q_{r_1}^{\tau - r_1^m, \tau}
	}
	f.
	\label{PL3.2.3}
\end{equation}
Really, if
$$
	\essinf_{
		Q_{r_1}^{\tau - r_1^m, \tau}
	}
	f
	=
	0,
$$
then~\eqref{PL3.2.3} is evident; therefore, it can be assumed without loss of generality that
$$
	\essinf_{
		Q_{r_1}^{\tau - r_1^m, \tau}
	}
	f
	>
	0.
$$
In this case, we have
$$
	\int_{
		Q_{r_1}^{\tau - r_1^m, \tau}
	}
	g (u)
	\,
	dx
	dt
	<
	\infty.
$$
since
$$
	\int_{
		Q_{r_1}^{\tau - r_1^m, \tau}
	}
	f (x, t)
	g (u)
	\,
	dx
	dt
	<
	\infty.
$$
In so doing, we obviously obtain
\begin{equation}
	\int_{
		Q_{r_1}^{\tau - r_1^m, \tau}
	}
	f (x, t)
	g (u)
	\,
	dx
	dt
	\ge
	\essinf_{
		Q_{r_1}^{\tau - r_1^m, \tau}
	}
	f
	\int_{
		Q_{r_1}^{\tau - r_1^m, \tau}
	}
	g (u)
	\,
	dx
	dt.
	\label{PL3.2.a1}
\end{equation}
Since $G$ is a non-decreasing convex function, one can assert that
\begin{align*}
	&
	\frac{
		1
	}{
		\mes Q_{r_1}^{\tau - r_1^m, \tau}
	}
	\int_{
		Q_{r_1}^{\tau - r_1^m, \tau}
	}
	g (u)
	\,
	dx
	dt
	=
	\frac{
		1
	}{
		\mes Q_{r_1}^{\tau - r_1^m, \tau}
	}
	\int_{
		Q_{r_1}^{\tau - r_1^m, \tau}
	}
	G (u^p)
	\,
	dx
	dt
	\\
	&
	\qquad
	\ge
	G
	\left(
		\frac{
			1
		}{
			\mes Q_{r_1}^{\tau - r_1^m, \tau}
		}
		\int_{
			Q_{r_1}^{\tau - r_1^m, \tau}
		}
		u^p
		\,
		dx
		dt
	\right)
	\ge
	G (J_R (r_1, \tau)),
\end{align*}
whence in turn it follows that
$$
	\int_{
		Q_{r_1}^{\tau - r_1^m, \tau}
	}
	g (u)
	\,
	dx
	dt
	\ge
	G (J_R (r_1, \tau))
	\mes Q_{r_1}^{\tau - r_1^m, \tau}.
$$
In view of~\eqref{PL3.2.a1}, this implies~\eqref{PL3.2.3}.

Further, it does not present any particular problem to verify that
\begin{equation}
	A_1 R^{m + n} \le \mes Q_r^{\tau - r^m, \tau} \le A_2 R^{m + n}
	\label{PL3.2.a6}
\end{equation}
for all $R \le r \le 2 R$, where the constants $A_1 > 0$ and $A_2 > 0$ depend only on $m$ and $n$.
Thus, combining~\eqref{PL3.2.2} and~\eqref{PL3.2.3}, we arrive at~\eqref{L3.2.1}.

Now, assume that the opposite inequality to~\eqref{PL3.2.a2} holds, i.e.
$$
	\frac{1}{(r_2 - r_1)^m}
	\int_{
		Q_{r_2}^{\tau - r_2^m, \tau}
		\setminus
		Q_{r_1}^{\tau - r_2^m, \tau}
	}
	u^p
	\,
	dx
	dt
	<
	\frac{1}{r_2^m - r_1^m}
	\int_{
		Q_{r_2}^{\tau - r_2^m, \tau - r_1^m}
	}
	u
	\,
	dx
	dt.
$$
Combining this with~\eqref{PL3.2.1}, we have
\begin{equation}
	\int_{
		Q_{r_2}^{\tau - r_2^m, \tau - r_1^m}
	}
	u
	\,
	dx
	dt
	\ge
	C
	(r_2^m - r_1^m)
	\int_{
		Q_{r_1}^{\tau - r_1^m, \tau}
	}
	f (x, t)
	g (u)
	\,
	dx
	dt.
	\label{PL3.2.4}
\end{equation}
By the H\"older inequality, one can show that
$$
	\int_{
		Q_{r_2}^{\tau - r_2^m, \tau - r_1^m}
	}
	u
	\,
	dx
	dt
	\le
	\left(
		\mes Q_{r_2}^{\tau - r_2^m, \tau - r_1^m}
	\right)^{(p - 1) / p}
	\left(
		\int_{
			Q_{r_2}^{\tau - r_2^m, \tau - r_1^m}
		}
		u^p
		\,
		dx
		dt
	\right)^{1 / p},
$$
whence, taking into account the fact that
$$
	\mes Q_{r_2}^{\tau - r_2^m, \tau - r_1^m}
	\le
	C
	(r_2^m - r_1^m)
	R^n,
$$
we obtain
$$
	\int_{
		Q_{r_2}^{\tau - r_2^m, \tau - r_1^m}
	}
	u
	\,
	dx
	dt
	\le
	C
	(r_2^m - r_1^m)^{(p - 1) / p}
	R^{n (p - 1) / p}
	\left(
		\int_{
			Q_{r_2}^{\tau - r_2^m, \tau - r_1^m}
		}
		u^p
		\,
		dx
		dt
	\right)^{1 / p}.
$$
Thus,~\eqref{PL3.2.4} implies the estimate
\begin{equation}
	\int_{
		Q_{r_2}^{\tau - r_2^m, \tau - r_1^m}
	}
	u^p
	\,
	dx
	dt
	\ge
	\frac{
		C (r_2^m - r_1^m)
	}{
		R^{n (p - 1)}
	}
	\left(
		\int_{
			Q_{r_1}^{\tau - r_1^m, \tau}
		}
		f (x, t)
		g (u)
		\,
		dx
		dt
	\right)^p.
	\label{PL3.2.a4}
\end{equation}
Due to the obvious inequality 
$$
	r_2^m - r_1^m \ge C (r_2 - r_1) R^{m - 1}
$$
and relationship~\eqref{PL3.2.3} estimate~\eqref{PL3.2.a4} yields
\begin{equation}
	\int_{
		Q_{r_2}^{\tau - r_2^m, \tau - r_1^m}
	}
	u^p
	\,
	dx
	dt
	\ge
	C
	R^{m (p + 1) + n - 1}
	(r_2 - r_1)
	G^p (J_R (r_1, \tau))
	\essinf_{
		Q_{r_1}^{\tau - r_1^m, \tau}
	}
	f^p.
	\label{PL3.2.a3}
\end{equation}
Since 
$$
	Q_{r_2}^{\tau - r_2^m, \tau - r_1^m}
	\subset
	Q_{r_2}^{\tau - r_2^m, \tau}
	\setminus
	Q_{r_1}^{\tau - r_1^m, \tau},
$$
we have
\begin{align*}
	J_R (r_2, \tau) - J_R (r_1, \tau)
	&
	=
	\frac{
		1
	}{
		\mes Q_{2 R}^{\tau - 2^m R^m, \tau}
	}
	\int_{
		Q_{r_2}^{\tau - r_2^m, \tau}
		\setminus
		Q_{r_1}^{\tau - r_1^m, \tau}
	}
	u^p
	\,
	dx
	dt
	\\
	&
	\ge
	\frac{
		1
	}{
		\mes Q_{2 R}^{\tau - 2^m R^m, \tau}
	}
	\int_{
		Q_{r_2}^{\tau - r_2^m, \tau - r_1^m}
	}
	u^p
	\,
	dx
	dt.
\end{align*}
In view of~\eqref{PL3.2.a6}, this implies the estimate
$$
	J_R (r_2, \tau) - J_R (r_1, \tau)
	\ge
	\frac{
		C
	}{ 
		R^{m + n}
	}
	\int_{
		Q_{r_2}^{\tau - r_2^m, \tau - r_1^m}
	}
	u^p
	\,
	dx
	dt
$$
from which, taking into account~\eqref{PL3.2.a3}, we obtain~\eqref{L3.2.2}. 
\end{proof}

\begin{Lemma}\label{L3.3}
Let $R > 0$ and $\tau > 0$ be some real numbers such that $\tau > 2^m R^m$ and
\begin{equation}
	\int_{
		Q_R^{\tau - R^m, \tau}
	}
	u^p
	\,
	dx
	dt
	> 
	0.
	\label{L3.3.1}
\end{equation}
If $f (x, t) \ge 0$ for almost all $(x, t) \in Q_{2 R}^{\tau - 2^m R^m, \infty}$, then
\begin{align}
	&
	\int_{
		J_R (R, \tau)
	}^\infty
	G^{- 1 / m} (\zeta / 2) \zeta^{1 / m - 1}
	\,
	d\zeta
	+
	\int_{
		J_R (R, \tau)
	}^\infty
	\frac{
		d\zeta
	}{
		G (\zeta / 2)
	}
	+
	\int_{
		J_R (R, \tau)
	}^\infty
	\frac{
		d\zeta
	}{
		G^p (\zeta / 2)
	}
	\nonumber
	\\
	&
	\qquad
	\ge
	C
	\min 
	\left\{ 
		R
		\essinf_{
			Q_{2 R}^{\tau - 2^m R^m, \tau}
		}
		f^{1 / m},
		R^m
		\essinf_{
			Q_{2 R}^{\tau - 2^m R^m, \tau}
		}
		f,
		R^{m p}
		\essinf_{
			Q_{2 R}^{\tau - 2^m R^m, \tau}
		}
		f^p
	\right\}.
	\label{L3.3.2}
\end{align}
\end{Lemma}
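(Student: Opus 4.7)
The strategy is to iterate Lemma~\ref{L3.2} along a dyadic ``doubling'' sequence in the radial variable and then convert the resulting geometric sum into the integrals appearing in~\eqref{L3.3.2}. Write $J(r) := J_R(r, \tau)$, $J_0 := J(R)$ (positive by~\eqref{L3.3.1}), and $F := \essinf_{Q_{2R}^{\tau - 2^m R^m, \tau}} f$; we may assume $F > 0$, otherwise the right-hand side of~\eqref{L3.3.2} is zero and there is nothing to prove. Since $u \in L_{p,loc}({\mathbb R}_+^{n+1})$, dominated convergence shows that $r \mapsto J(r)$ is continuous, non-decreasing, and bounded on $[R, 2R]$. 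Taking $N \ge 0$ maximal with $2^N J_0 \le J(2R)$, I would use the intermediate value theorem to pick $r_0 = R < r_1 < \ldots < r_N \le 2R$ with $J(r_i) = 2^i J_0$, and then set $r_{N+1} := 2R$.

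For each consecutive pair $(r_i, r_{i+1})$ we have $J(r_{i+1}) - J(r_i) \le 2^i J_0$. Whichever of~\eqref{L3.2.1} or~\eqref{L3.2.2} holds at step $i$, solving it for $r_{i+1} - r_i$ yields the uniform bound
\[
    r_{i+1} - r_i
    \le
    a_i + b_i,
    \quad
    a_i := \left(\frac{2^i J_0}{C G(2^i J_0) F}\right)^{1/m},
    \quad
    b_i := \frac{2^i J_0}{C R^{m p - 1} G^p(2^i J_0) F^p}.
\]
Summing from $i = 0$ to $N$ gives $R = \sum_{i=0}^N (r_{i+1} - r_i) \le \sum_{i=0}^N a_i + \sum_{i=0}^N b_i$.

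The crux is the passage from these discrete sums to the integrals. Monotonicity of $G$ supplies the comparison $G^{-1/m}(\zeta/2) \ge G^{-1/m}(2^i J_0)$ for every $\zeta \in [2^i J_0, 2^{i+1} J_0]$, whence the elementary computation of $\int_{2^i J_0}^{2^{i+1} J_0} \zeta^{1/m - 1}\, d\zeta$ produces $(2^i J_0)^{1/m} G^{-1/m}(2^i J_0) \le C \int_{2^i J_0}^{2^{i+1} J_0} G^{-1/m}(\zeta/2)\, \zeta^{1/m - 1}\, d\zeta$, and an analogous bound holds for $b_i$ with $G^{-p}$. This is precisely where the shift $\zeta \mapsto \zeta/2$ in the argument of $G$ in~\eqref{L3.3.2} originates. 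Summing these comparisons and feeding them into the estimate for $R$ above yields
\[
    R
    \le
    C F^{-1/m} \int_{J_0}^\infty G^{-1/m}(\zeta/2)\, \zeta^{1/m - 1}\, d\zeta
    +
    \frac{C}{R^{m p - 1} F^p} \int_{J_0}^\infty \frac{d\zeta}{G^p(\zeta/2)} .
\]

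At least one of the two summands on the right must exceed $R/2$, so either the first integral in~\eqref{L3.3.2} is bounded below by $C R F^{1/m}$ or the third is bounded below by $C R^{m p} F^p$. Since the middle integral is non-negative and $\min\{R F^{1/m}, R^m F, R^{m p} F^p\} \le \min\{R F^{1/m}, R^{m p} F^p\}$, adding all three integrals proves~\eqref{L3.3.2}. The main technical obstacle is the discrete-to-continuous comparison: one has to choose the dyadic endpoints so that the monotonicity of $G$ produces exactly the factor $G(\zeta/2)$ in the integrand, and to verify that the constants absorbed into $C$ remain independent of $N$ (which grows as $J(2R)/J_0 \to \infty$). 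A mild book-keeping issue is the degenerate case $r_N = 2R$, which simply makes the last step empty and does not affect the summation.
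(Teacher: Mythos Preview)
Your argument is correct and follows the same dyadic iteration of Lemma~\ref{L3.2} as the paper: build a doubling sequence for $J_R(\cdot,\tau)$, apply Lemma~\ref{L3.2} on each step, convert the discrete sums to the integrals in~\eqref{L3.3.2} via monotonicity of $G$, and telescope. The one structural difference is that the paper partitions the indices into sets $\Xi_1$, $\Xi_2$ according to which alternative of Lemma~\ref{L3.2} holds and, for the final possibly non-doubling step, invokes the middle integral $\int G^{-1}(\zeta/2)\,d\zeta$ together with the corresponding term $R^m F$ in the minimum; your cruder bound $r_{i+1}-r_i\le a_i+b_i$ (using $J(r_{i+1})-J(r_i)\le 2^iJ_0$, which is valid for \emph{every} step including the last) treats all steps uniformly and so never needs that middle term --- in effect you prove the slightly sharper statement with only the first and third integrals on the left and only $RF^{1/m}$ and $R^{mp}F^p$ in the minimum on the right.
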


\begin{proof}
We construct a finite sequence of real numbers $\{ r_i \}_{i=0}^l$ as follows.
Let us take $r_0 = R$. Assume further that $r_i$ is already defined.
If $r_i \ge 3 R / 2$, then we put $l = i$ and stop; otherwise we take
$$
	r_{i+1}
	=
	\sup
	\{
		r \in [r_i, 2 R] 
		:
		J_R (r, \tau)
		\le
		2 J_R (r_i, \tau)
	\}.
$$
Since $u \in L_{p, loc} ({\mathbb R}_+^{n+1})$, this procedure must terminate at a finite step. 
It follows from~\eqref{L3.3.1} that $J_R (r_i, \tau) > 0$ for all $0 \le i \le l$; 
therefore, $\{ r_i \}_{i=0}^l$ is a strictly increasing sequence.
It can also be seen that 
$$
	J_R (r_{i+1}, \tau) = 2 J_R (r_i, \tau)
	\quad
	\mbox{for all } 0 \le i \le l - 2
$$ 
and
$$
	J_R (r_l, \tau) \le 2 J_R (r_{l-1}, \tau).
$$
Moreover, if $J_R (r_l, \tau) < 2 J_R (r_{l-1}, \tau)$, then $r_l = 2 R$ and $r_l - r_{l-1} \ge R / 2$.

In view of Lemma~\ref{L3.2}, for any $0 \le i \le l - 1$ at least one of the following two
inequalities is valid:
\begin{equation}
	J_R (r_{i+1}, \tau) - J_R (r_i, \tau)
	\ge
	C 
	(r_{i+1} - r_i)^m
	G (J_R (r_i, \tau))
	\essinf_{
		Q_{2 R}^{\tau - 2^m R^m, \tau}
	}
	f,
	\label{PL3.3.1}
\end{equation}
\begin{equation}
	J_R (r_{i+1}, \tau) - J_R (r_i, \tau)
	\ge
	C 
	R^{m p - 1}
	(r_{i+1} - r_i)
	G^p (J_R (r_i, \tau))
	\essinf_{
		Q_{2 R}^{\tau - 2^m R^m, \tau}
	}
	f^p.
	\label{PL3.3.2}
\end{equation}

By $\Xi_1$ we denote the set of integers $0 \le i \le l - 1$ for which~\eqref{PL3.3.1} is valid.
In so doing, let $\Xi_2 = \{ 0, \ldots, l - 1 \} \setminus \Xi_1$.

We claim that
\begin{align}
	&
	\int_{
		J_R (r_i, \tau)
	}^{
		J_R (r_{i+1}, \tau)
	}
	G^{- 1 / m} (\zeta / 2) \zeta^{1 / m - 1}
	\,
	d\zeta
	+
	\int_{
		J_R (r_i, \tau)
	}^{
		J_R (r_{i+1}, \tau)
	}
	\frac{
		d\zeta
	}{
		G (\zeta / 2)
	}
	\nonumber
	\\
	&
	\qquad
	\ge
	C
	(r_{i+1} - r_i)
	\min 
	\left\{ 
		\essinf_{
			Q_{2 R}^{\tau - 2^m R^m, \tau}
		}
		f^{1 / m},
		R^{m - 1}
		\essinf_{
			Q_{2 R}^{\tau - 2^m R^m, \tau}
		}
		f
	\right\}
	\label{PL3.3.3}
\end{align}
for all $i \in \Xi_1$.
Really,~\eqref{PL3.3.1} implies the inequality
$$
	\left(
		\frac{
			J_R (r_{i+1}, \tau) - J_R (r_i, \tau)
		}{
			G (J_R (r_i, \tau))
		}
	\right)^{1 / m}
	\ge
	C
	(r_{i+1} - r_i)
	\essinf_{
		Q_{2 R}^{\tau - 2^m R^m, \tau}
	}
	f^{1 / m}.
$$
If $J_R (r_{i+1}, \tau) = 2 J_R (r_i, \tau)$, then due to monotonicity of the function $G$ we have
$$
	\int_{
		J_R (r_i, \tau)
	}^{
		J_R (r_{i+1}, \tau)
	}
	G^{- 1 / m} (\zeta / 2) \zeta^{1 / m - 1}
	\,
	d\zeta
	\ge
	C
	\left(
		\frac{
			J_R (r_{i+1}, \tau) - J_R (r_i, \tau)
		}{
			G (J_R (r_i, \tau))
		}
	\right)^{1 / m}.
$$
Combining the last two inequalities, we get
$$
	\int_{
		J_R (r_i, \tau)
	}^{
		J_R (r_{i+1}, \tau)
	}
	G^{- 1 / m} (\zeta / 2) \zeta^{1 / m - 1}
	\,
	d\zeta
	\ge
	C
	(r_{i+1} - r_i)
	\essinf_{
		Q_{2 R}^{\tau - 2^m R^m, \tau}
	}
	f^{1 / m},
$$
whence~\eqref{PL3.3.3} follows at once.

In turn, if $J_R (r_{i+1}, \tau) < 2 J_R (r_i, \tau)$ for some $i \in \Xi_1$, 
then $i = l - 1$ and, moreover, $r_{i+1} - r_i \ge R / 2$.
Thus,~\eqref{PL3.3.1} implies the estimate
$$
	\frac{
		J_R (r_{i+1}, \tau) - J_R (r_i, \tau)
	}{
		G (J_R (r_i, \tau))
	}
	\ge
	C
	R^{m - 1}
	(r_{i+1} - r_i)
	\essinf_{
		Q_{2 R}^{\tau - 2^m R^m, \tau}
	}
	f.
$$
Since
$$
	\int_{
		J_R (r_i, \tau)
	}^{
		J_R (r_{i+1}, \tau)
	}
	\frac{
		d\zeta
	}{
		G (\zeta / 2)
	}
	\ge
	\frac{
		C (J_R (r_{i+1}, \tau) - J_R (r_i, \tau))
	}{
		G (J_R (r_i, \tau))
	},
$$
this yields
$$
	\int_{
		J_R (r_i, \tau)
	}^{
		J_R (r_{i+1}, \tau)
	}
	\frac{
		d\zeta
	}{
		G (\zeta / 2)
	}
	\ge
	C
	R^{m - 1}
	(r_{i+1} - r_i)
	\essinf_{
		Q_{2 R}^{\tau - 2^m R^m, \tau}
	}
	f,
$$
whence we again derive~\eqref{PL3.3.3}.

In a similar way, it can be shown that
\begin{equation}
	\int_{
		J_R (r_i, \tau)
	}^{
		J_R (r_{i+1}, \tau)
	}
	\frac{
		d\zeta
	}{
		G^p (\zeta / 2)
	}
	\ge
	C
	R^{m p - 1}
	(r_{i+1} - r_i)
	\essinf_{
		Q_{2 R}^{\tau - 2^m R^m, \tau}
	}
	f^p
	\label{PL3.3.4}
\end{equation}
for all $i \in \Xi_2$. Really, taking into account~\eqref{PL3.3.2}, we have
$$
	\frac{
		J_R (r_{i+1}, \tau) - J_R (r_i, \tau)
	}{
		G^p (J_R (r_i, \tau))
	}
	\ge
	C
	R^{m p - 1}
	(r_{i+1} - r_i)
	\essinf_{
		Q_{2 R}^{\tau - 2^m R^m, \tau}
	}
	f^p.
$$
In view of the inequality
$$
	\int_{
		J_R (r_i, \tau)
	}^{
		J_R (r_{i+1}, \tau)
	}
	\frac{
		d\zeta
	}{
		G^p (\zeta / 2)
	}
	\ge
	\frac{
		C (J_R (r_{i+1}, \tau) - J_R (r_i, \tau))
	}{
		G^p (J_R (r_i, \tau))
	},
$$
this obviously implies~\eqref{PL3.3.4}.

Further, summing~\eqref{PL3.3.3} over all $i \in \Xi_1$, we obtain
\begin{align*}
	&
	\int_{
		J_R (r_0, \tau)
	}^{
		\infty
	}
	G^{- 1 / m} (\zeta / 2) \zeta^{1 / m - 1}
	\,
	d\zeta
	+
	\int_{
		J_R (r_0, \tau)
	}^{
		\infty
	}
	\frac{
		d\zeta
	}{
		G (\zeta / 2)
	}
	\\
	&
	\qquad
	\ge
	C
	\min 
	\left\{ 
		\essinf_{
			Q_{2 R}^{\tau - 2^m R^m, \tau}
		}
		f^{1 / m},
		R^{m - 1}
		\essinf_{
			Q_{2 R}^{\tau - 2^m R^m, \tau}
		}
		f
	\right\}
	\sum_{i \in \Xi_1}
	(r_{i+1} - r_i).
\end{align*}
Analogously,~\eqref{PL3.3.4} yields
$$
	\int_{
		J_R (r_0, \tau)
	}^{
		\infty
	}
	\frac{
		d\zeta
	}{
		G^p (\zeta / 2)
	}
	\ge
	C
	R^{m p - 1}
	\essinf_{
		Q_{2 R}^{\tau - 2^m R^m, \tau}
	}
	f^p
	\sum_{i \in \Xi_2}
	(r_{i+1} - r_i).
$$
Thus, summing the last two inequalities, we conclude that
\begin{align*}
	&
	\int_{
		J_R (r_0, \tau)
	}^\infty
	G^{- 1 / m} (\zeta / 2) \zeta^{1 / m - 1}
	\,
	d\zeta
	+
	\int_{
		J_R (r_0, \tau)
	}^\infty
	\frac{
		d\zeta
	}{
		G (\zeta / 2)
	}
	+
	\int_{
		J_R (r_0, \tau)
	}^\infty
	\frac{
		d\zeta
	}{
		G^p (\zeta / 2)
	}
	\\
	&
	\qquad
	\ge
	C
	\min 
	\left\{ 
		\essinf_{
			Q_{2 R}^{\tau - 2^m R^m, \tau}
		}
		f^{1 / m},
		R^{m - 1}
		\essinf_{
			Q_{2 R}^{\tau - 2^m R^m, \tau}
		}
		f,
		R^{m p - 1}
		\essinf_{
			Q_{2 R}^{\tau - 2^m R^m, \tau}
		}
		f^p
	\right\}
	(r_l - r_0).
\end{align*}
To complete the proof, it remains to note that $r_l - r_0 \ge R / 2$ 
by the construction of the sequence $\{ r_i \}_{i=0}^l$.
\end{proof}

We need the following known assertion.

\begin{Lemma}[see~{\cite[Lemma~2.3]{meIzv}}]\label{L3.4}
Let $\psi : (0,\infty) \to (0,\infty)$ and $\gamma : (0,\infty) \to (0,\infty)$
be measurable functions satisfying the condition
$$
	\gamma (\zeta)
	\le
	\essinf_{
		(\zeta / \theta, \theta \zeta)
	}
	\psi
$$
with some real number $\theta > 1$ for almost all $\zeta \in (0, \infty)$.
Also assume that 
$0 < \alpha \le 1$,
$M_1 > 0$,
$M_2 > 0$,
and
$\nu > 1$
are some real numbers with
$M_2 \ge \nu M_1$.
Then
$$
	\left(
		\int_{M_1}^{M_2}
		\gamma^{-\alpha} (\zeta)
		\zeta^{\alpha - 1}
		\,
		d \zeta
	\right)^{1 / \alpha}
	\ge
	A
	\int_{M_1}^{M_2}
	\frac{
		d \zeta
	}{
		\psi (\zeta)
	},
$$
where the constant $A > 0$ depends only on $\alpha$, $\nu$, and $\theta$.
\end{Lemma}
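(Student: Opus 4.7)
The plan is to exploit the scale-invariance of the hypothesis through the substitution $\zeta = e^s$, which converts $d\zeta/\zeta$ into Lebesgue $ds$ and the interval $(\zeta/\theta, \theta\zeta)$ into the symmetric interval $(s-\log\theta, s+\log\theta)$. Setting $h(s) = e^s/\gamma(e^s)$ and $k(s) = e^s/\psi(e^s)$, one has $\gamma^{-\alpha}(\zeta)\zeta^{\alpha-1}\,d\zeta = h^\alpha(s)\,ds$ and $\psi^{-1}(\zeta)\,d\zeta = k(s)\,ds$, while the hypothesis becomes the pointwise bound $k(t) \le \theta\,h(s)$ in the essential sense whenever $|t-s| < \log\theta$. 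The target inequality then reads $\|h\|_{L^\alpha(I)} \ge A\,\|k\|_{L^1(I)}$ on the interval $I = (\log M_1, \log M_2)$, whose length $L$ satisfies $L \ge \log\nu$.

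Next I would partition $I$ into $N = \lceil L/\log\theta\rceil$ consecutive subintervals $I_j$ of equal length $\ell = L/N \le \log\theta$. Since $\ell \le \log\theta$, for every $s \in I_j$ the $\log\theta$-neighbourhood of $s$ contains $I_j$, so the hypothesis delivers $h(s) \ge \theta^{-1}M_j$ for a.e.\ $s \in I_j$, where $M_j := \esssup_{I_j} k$. Integrating gives the twin estimates $\int_{I_j} h^\alpha\,ds \ge \ell\,\theta^{-\alpha}M_j^\alpha$ and $\int_{I_j} k\,ds \le \ell\,M_j$ on each piece. Invoking the subadditivity $(x+y)^\alpha \le x^\alpha + y^\alpha$ valid for $0 < \alpha \le 1$ and $x,y\ge 0$, iterated to $\sum_j M_j \le \bigl(\sum_j M_j^\alpha\bigr)^{1/\alpha}$, one then assembles
$$
\Bigl(\int_I h^\alpha\,ds\Bigr)^{1/\alpha} \ge \theta^{-1}\ell^{1/\alpha}\Bigl(\sum_j M_j^\alpha\Bigr)^{1/\alpha} \ge \theta^{-1}\ell^{1/\alpha-1}\int_I k\,ds.
$$
A short check shows $\ell \ge \min\{\tfrac12\log\theta,\log\nu\}$: the first bound holds when $L \ge \log\theta$ because then $N \le 2L/\log\theta$, and the second because $L < \log\theta$ forces $N=1$ and hence $\ell = L \ge \log\nu$. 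Thus the prefactor $\theta^{-1}\ell^{1/\alpha-1}$ is bounded below by a constant depending only on $\alpha,\theta,\nu$, yielding the required $A$.

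The principal subtlety I anticipate lies in the regime $\alpha < 1$ with $L$ close to $\log\nu$ but much smaller than $\log\theta$: the exponent $1/\alpha - 1$ is then strictly positive, so the quantitative lower bound $L \ge \log\nu$ is genuinely needed, and the partition must be arranged to avoid producing a spurious short leftover interval—choosing equal-length pieces circumvents this cleanly. The remaining bookkeeping, namely transferring the almost-everywhere hypothesis from individual points $s$ to uniform statements on each partition cell via the chain $\esssup_{I_j}k \le \theta h(s)$ for a.e.\ $s\in I_j$, is routine but not entirely mechanical and is the only place where one must be careful with null sets.
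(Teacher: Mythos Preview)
The paper does not prove this lemma; it is quoted without proof as a known result from \cite[Lemma~2.3]{meIzv}, so there is no in-paper argument to compare against.

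Your argument is correct. The logarithmic substitution $\zeta=e^{s}$ turns the multiplicative hypothesis into the local domination $k(t)\le\theta\,h(s)$ for a.e.\ $s$ and a.e.\ $t$ with $|t-s|<\log\theta$; partitioning $I$ into $N=\lceil L/\log\theta\rceil$ equal cells $I_j$ of length $\ell\le\log\theta$ then legitimately gives $h(s)\ge\theta^{-1}M_j$ a.e.\ on $I_j$ with $M_j=\esssup_{I_j}k$, hence $\int_{I_j}h^{\alpha}\ge\ell\,\theta^{-\alpha}M_j^{\alpha}$ and $\int_{I_j}k\le\ell\,M_j$. The subadditivity $\sum_j M_j\le\bigl(\sum_j M_j^{\alpha}\bigr)^{1/\alpha}$ for $0<\alpha\le1$ then yields the conclusion with prefactor $\theta^{-1}\ell^{1/\alpha-1}$, and your two-case verification of $\ell\ge\min\{\tfrac12\log\theta,\log\nu\}$ is exactly what is needed to make this prefactor depend only on $\alpha,\theta,\nu$. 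The boundary issue you flag (when $\ell=\log\theta$ one may have $|t-s|=\log\theta$ at endpoints) is measure-zero and harmless; in a polished write-up you could simply take $N=\lceil L/\log\theta\rceil+1$ to force $\ell<\log\theta$ without affecting the lower bound on $\ell$.
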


\begin{Lemma}\label{L3.5}
Let the hypotheses of Theorem~$\ref{T2.1}$ be valid, then
\begin{equation}
	\int_{
		Q_R^{\tau - R^m, \tau}
	}
	u^p
	\,
	dx
	dt
	\to
	0
	\quad
	\mbox{as } \tau \to \infty
	\label{L3.5.1}
\end{equation}
for any real number $R > 0$.
\end{Lemma}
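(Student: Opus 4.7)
The plan is to argue by contradiction using Lemma~\ref{L3.3}. Suppose the conclusion (L3.5.1) fails for some $R > 0$. Since $\mes Q_{2R}^{\tau - 2^m R^m, \tau}$ does not depend on $\tau$, this yields $\delta > 0$ and a sequence $\tau_n \to \infty$ with $J_R(R, \tau_n) \ge \delta$ for all $n$. In particular (L3.3.1) holds along $\tau_n$, so Lemma~\ref{L3.3} is applicable at each $\tau = \tau_n$.

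Along this sequence I examine the two sides of (L3.3.2). For the right-hand side, condition (T2.1.2) applied to $K = \overline{B_{2R}}$, together with the inclusion $Q_{2R}^{\tau_n - 2^m R^m, \tau_n} \subset B_{2R} \times (\tau_n - 2^m R^m, \infty)$, gives $\essinf_{Q_{2R}^{\tau_n - 2^m R^m, \tau_n}} f \to \infty$; hence the right-hand side of (L3.3.2) tends to $+\infty$. For the left-hand side, since the integrands are non-negative and $J_R(R,\tau_n) \ge \delta$, each of the three integrals is bounded by the same integral with lower limit $\delta$. It therefore suffices to prove that
\begin{equation*}
\int_\delta^\infty G^{-1/m}(\zeta/2)\,\zeta^{1/m-1}\,d\zeta,\quad
\int_\delta^\infty \frac{d\zeta}{G(\zeta/2)},\quad
\int_\delta^\infty \frac{d\zeta}{G^p(\zeta/2)}
\end{equation*}
are all finite; the left-hand side of (L3.3.2) is then bounded uniformly in $n$, contradicting the blow-up of the right-hand side.

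To establish these three finiteness claims I will use (T2.1.1) together with Lemma~\ref{L3.4}. The substitution $s = \zeta^p$ in~(T2.1.1), together with the identity $g(\zeta) = G(\zeta^p)$ coming from $G(\cdot) = g(\cdot^{1/p})$, recasts (T2.1.1) as $\int_1^\infty G^{-1/m}(s)\,s^{1/m-1}\,ds < \infty$; the further rescaling $\zeta = 2s$ then handles the first integral. For the second integral I apply Lemma~\ref{L3.4} with $\alpha = 1/m$, $\theta = 2$, $\psi(\zeta) = G(\zeta/2)$ and $\gamma(\zeta) = G(\zeta/4)$; monotonicity of $G$ yields $\gamma(\zeta) \le \essinf_{(\zeta/2, 2\zeta)} \psi$, and the lemma bounds $\int d\zeta/G(\zeta/2)$ by an $m$-th power of $\int G^{-1/m}(\zeta/4)\,\zeta^{1/m-1}\,d\zeta$, finite by the same substitution. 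For the third integral, (T2.1.1) combined with the non-integrability of $\zeta^{1/m-1}$ at $\infty$ forces $G(\zeta) \to \infty$, so $G^p \ge G$ outside a bounded set and the third integral is dominated by the second there.

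The main obstacle is the bookkeeping in this last step: carefully matching the substitution constants, the scaled argument $\zeta/2$, and the two powers $G$ and $G^p$ so that (T2.1.1) alone yields finiteness of all three tail integrals. Once this is in place, the contradiction is immediate.
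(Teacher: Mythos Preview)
Your proposal is correct and follows essentially the same route as the paper: recast (T2.1.1) as $\int_1^\infty G^{-1/m}(\zeta)\zeta^{1/m-1}\,d\zeta<\infty$ via the substitution $s=\zeta^p$, use Lemma~\ref{L3.4} with $\alpha=1/m$ to control $\int d\zeta/G$, compare $G^p$ to $G$ for the third integral, and then invoke Lemma~\ref{L3.3} together with (T2.1.2) to force $J_R(R,\tau)\to 0$. The only cosmetic differences are that the paper argues directly rather than by contradiction, and bounds $G^p(\zeta)\ge G^{p-1}(1)\,G(\zeta)$ for $\zeta\ge 1$ by monotonicity instead of first showing $G(\zeta)\to\infty$.
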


\begin{proof}
It can easily be seen that~\eqref{T2.1.1} is equivalent to the condition
$$
	\int_1^\infty
	G^{- 1 / m} (\zeta / 2) \zeta^{1 / m - 1}
	\,
	d\zeta
	<
	\infty.
$$
By Lemma~\ref{L3.4}, we obtain
$$
	\left(
		\int_1^\infty
		G^{- 1 / m} (\zeta / 2) \zeta^{1 / m - 1}
		\,
		d\zeta
	\right)^m
	\ge
	C
	\int_1^\infty
	\frac{
		d\zeta
	}{
		G (\zeta)
	}.
$$
This allows us to assert that
$$
	\int_1^\infty
	\frac{
		d\zeta
	}{
		G (\zeta)
	}
	<
	\infty.
$$
Since $G$ is a non-decreasing positive function on the interval $(0, \infty)$, we have
$G^p (\zeta) \ge G^{p - 1} (1) G (\zeta)$ for all $\zeta \ge 1$.
Hence, we can also assert that
$$
	\int_1^\infty
	\frac{
		d\zeta
	}{
		G^p (\zeta)
	}
	<
	\infty.
$$

Assume that $R > 0$ is some given real number.
In view of~\eqref{T2.1.2}, $f$ is a non-negative function almost everywhere on 
$Q_{2 R}^{\tau - 2^m R^m, \infty}$ for all sufficiently large $\tau$. 
In so doing, it is obvious that the right-hand side of~\eqref{L3.3.2} tends to infinity 
as $\tau \to \infty$.
Thus, applying Lemma~\ref{L3.3}, we obtain 
$$
	J_R (R, \tau)
	\to
	0
	\quad
	\mbox{as } \tau \to \infty,
$$
whence~\eqref{L3.5.1} follows an once.
\end{proof}

\begin{proof}[Proof of Theorem~$\ref{T2.1}$]
Let $K$ be a compact subset of ${\mathbb R}^n$ and $R > 0$ be a real number 
such that $K \subset B_{R/2}$.
We denote
\begin{equation}
	U (t)
	=
	\int_{
		B_{R/2}
	}
	u (x, t)
	\,
	dx.
	\label{PT2.1.1}
\end{equation}
Since $u \in L_{1, loc} ({\mathbb R}_+^{n+1})$, the right-hand side of~\eqref{PT2.1.1} is defined 
for almost all $t \in (0, \infty)$ and, moreover, $U \in L_{1, loc} (0, \infty)$.
Let us put
$$
	U_h (\tau)
	=
	\int_0^\infty
	\omega_h (\tau - t)
	U (t)
	\,
	d t,
	\quad
	\tau > h > 0,
$$
where $\omega_h$ is given by~\eqref{3.1}. 
We have
$$
	\| U_h - U \|_{
		L_1 (H)
	}
	\to
	0
	\quad
	\mbox{as } h \to +0
$$
for any compact set $H \subset (0, \infty)$.
Hence, there exists a sequence of positive real numbers $\{ h_i \}_{i=1}^\infty$ such that
$h_i \to 0$ as $i \to \infty$ and
$$
	\lim_{i \to \infty} 
	U_{h_i} (\tau) 
	=
	U (\tau)
$$
for almost all $\tau \in (0, \infty)$.
Also let $\tau_0 > R^m$ be a real number such that $f$ is a non-negative function
almost everywhere on $Q_R^{\tau_0 - R^m, \infty}$.
In view of~\eqref{T2.1.2}, such a real number obviously exists. 

Lemma~\ref{L3.1} with $r_1 = R / 2$, $r_2 = R$, $\tau_1 = (R / 2)^m$, and $\tau_2 = R^m$ yields
\begin{align}
	&
	\frac{1}{R^m}
	\int_{
		Q_R^{\tau - R^m, \tau + h_i}
	}
	u^p
	\,
	dx
	dt
	+
	\frac{1}{R^m - (R / 2)^m}
	\int_{
		Q_R^{\tau - R^m, \tau - (R / 2)^m}
	}
	u
	\,
	dx
	dt
	\nonumber
	\\
	&
	\qquad
	\ge
	C
	\int_{
		Q_{R / 2}^{\tau - h_i, \tau + h_i}
	}
	\omega_{h_i} (\tau - t)
	u 
	\,
	dx
	dt
	\label{PT2.1.3}
\end{align}
for all $\tau > \tau_0$ and for all $i$ such that $h_i < (R / 2)^m$.
Note that the first summand in the right-hand side of~\eqref{L3.1.1} is non-negative; 
therefore, it can be dropped. 

Since
$$
	\int_{
		Q_{R / 2}^{\tau - h_i, \tau + h_i}
	}
	\omega_{h_i} (\tau - t)
	u 
	\,
	dx
	dt
	=
	U_{h_i} (\tau),
	\quad
	\tau > h_i,
$$
passing to the limit as $i \to \infty$ in~\eqref{PT2.1.3}, we obtain
\begin{equation}
	\frac{1}{R^m}
	\int_{
		Q_R^{\tau - R^m, \tau}
	}
	u^p
	\,
	dx
	dt
	+
	\frac{1}{R^m - (R / 2)^m}
	\int_{
		Q_R^{\tau - R^m, \tau - (R / 2)^m}
	}
	u
	\,
	dx
	dt
	\ge
	C
	U (\tau)
	\label{PT2.1.4}
\end{equation}
for almost all $\tau \in (\tau_0, \infty)$.
By the H\"older inequality,
$$
	\int_{
		Q_R^{\tau - R^m, \tau - (R / 2)^m}
	}
	u
	\,
	dx
	dt
	\le
	\int_{
		Q_R^{\tau - R^m, \tau}
	}
	u
	\,
	dx
	dt
	\le
	C
	R^{(m + n) (p - 1) / p}
	\left(
		\int_{
			Q_R^{\tau - R^m, \tau}
		}
		u^p
		\,
		dx
		dt
	\right)^{1 / p}.
$$
Thus,~\eqref{PT2.1.4} implies the estimate
$$
	R^{-m}
	\int_{
		Q_R^{\tau - R^m, \tau}
	}
	u^p
	\,
	dx
	dt
	+
	R^{n - (m + n) / p}
	\left(
		\int_{
			Q_R^{\tau - R^m, \tau}
		}
		u^p
		\,
		dx
		dt
	\right)^{1 / p}
	\ge
	C
	U (\tau)
$$
for almost all $\tau \in (\tau_0, \infty)$.
To complete the proof, it remains to use Lemma~\ref{L3.5}.
\end{proof}

\end{document}